\title[RLWE and PLWE over cyclotomic fields are not equivalent]{RLWE and PLWE over cyclotomic fields\\are not equivalent}
\author[A.~J.~Di~Scala]{Antonio J.~Di~Scala}
\address{\parbox{\linewidth}{
Politecnico di Torino, Department of Mathematical Sciences\\
Corso Duca degli Abruzzi 24, 10129 Torino, Italy\\[-8pt]}}
\email{antonio.discala@polito.it}
\author[C.~Sanna]{Carlo Sanna}
\address{\parbox{\linewidth}{
Politecnico di Torino, Department of Mathematical Sciences\\
Corso Duca degli Abruzzi 24, 10129 Torino, Italy\\[-8pt]}}
\email{carlo.sanna.dev@gmail.com}
\author[E.~Signorini]{Edoardo Signorini}
\address{\parbox{\linewidth}{
Telsy S.p.A.\\
Corso Svizzera 185, 10149 Torino, Italy\\[-8pt]}}
\email{edoardo.signorini@telsy.it}
\keywords{cyclotomic polynomial; Vandermonde matrix; condition number; RLWE; PLWE}
\subjclass[2010]{Primary: 11C99, Secondary: 15A12, 15B05, 15B05, 94A60}
\newtheorem{theorem}{Theorem}[section]
\newtheorem{corollary}{Corollary}[section]
\newtheorem{lemma}[theorem]{Lemma}
\theoremstyle{remark}
\newcommand{\Cond}{\operatorname{Cond}}
\newcommand{\Tr}{\operatorname{Tr}}
\newcommand{\Id}{\operatorname{Id}}
\begin{document}

\maketitle

\begin{abstract}
    We prove that the Ring Learning With Errors (RLWE) and the Polynomial Learning With Errors (PLWE) problems over the cyclotomic field $\mathbb{Q}(\zeta_n)$ are not equivalent.
    Precisely, we show that reducing one problem to the other increases the noise by a factor that is more than polynomial in $n$.
    We do so by providing a lower bound, holding for infinitely many positive integers $n$, for the condition number of the Vandermonde matrix of the $n$th cyclotomic polynomial.
\end{abstract}

\section{Introduction}

Since the theoretical results of Ajtai~\cite{Ajt96}, lattice-based cryptography has gained increasing interest. 
Indeed, numerous lattice-based encryption and digital signature schemes, with performance comparable or even superior to that of their number-theoretic counterparts, have been proposed~\cite{ADPS16, BDK+18, DKL+18, HRSS17}.
In particular, because of their presumed resistance against quantum attacks, lattice-based proposals are the most numerous in the final phase of the NIST post-quantum standardization process, with finalist candidates in both key encapsulation~\cite{ABD+20, BBD+20, CDR+20} and digital signature schemes~\cite{BDK+21,FHK+20}.

The main building block of lattice-based cryptographic schemes is the Learning With Errors (LWE) problem~\cite{Reg05}, which, roughly speaking, consists of retrieving a secret vector $s \in \mathbb{Z}_q^n$ from a noisy random sample of matrix products.
On the one hand, LWE-based encryption schemes enjoy good computational efficiency and solid theoretical security bases.
On the other hand, they require the ciphertexts or the public keys to be nearly quadratic with respect to the security parameters.
To overcome this inefficiency, algebraic variants of the LWE problem have been introduced, which consider the problem no longer over $\mathbb{Z}_q$ but over the quotient ring $\mathbb{Z}_q[X]/(f)$, where $f\in \mathbb{Z}_q[X]$ is a monic and irreducible polynomial.
The variant known as Polynomial-LWE (PLWE), was first proposed using power-of-two degree cyclotomic polynomials~\cite{SSTX09}. 
Later, Lyubashevsky, Peikert, and Regev~\cite{LPR10} introduced the Ring-LWE (RLWE) variant over the ring of integers $\mathcal{O}_K$ of a number field $K = \mathbb{Q}(\theta)$ (for surveys on RLWE, see~\cite{B-C2020a, ELOS2016}).

The main advantage of RLWE (and of later generalizations such as Module-LWE~\cite{LS15}) is the provable-security link with hard computational problems over (ideal) lattices, as for plain LWE.
Nevertheless, most of the concrete constructions of lattice-based schemes, while enjoying the security proofs of RLWE, are expressed in the simpler formalism of PLWE. The latter is in fact preferable in implementations, where the modular arithmetic between polynomials can be efficiently implemented.
For these reasons, it is interesting to study for which families of polynomials $f$ the RLWE and PLWE problems are equivalent, that is, every solution of the first problem can be turned in polynomial time into a solution of the second problem, and viceversa, incurring in a noise increase that is polynomial in the degree of $f$.

More precisely, let $K = \mathbb{Q}(\theta)$ be a monogenic number field of degree $m$, and let $f \in \mathbb{Z}[X]$ be the minimal polynomial of $\theta$, so that $\mathcal{O}_K \cong \mathbb{Z}[X]/(f)$.
The geometric notion of short element derives from a choice of a norm on $K$ by embedding the number field in $\mathbb{C}^m$.
On~the one hand, RLWE makes use of the \emph{canonical embedding} (or \emph{Minkowski embedding}) $\sigma$ from $K$ to $\mathbb{C}^m$, where $\sigma_i(\theta)$ ($i=1,\ldots,m$) are the Galois conjugates of $\theta$.
On the other hand, PLWE makes use of the \emph{coefficient embedding}, which maps each $x \in \mathcal{O}_K$ to the vector $(x_0, \ldots, x_{m-1}) \in \mathbb{Z}^m$ of its coefficients with respect to the power basis $1, \theta, \ldots, \theta^{m-1}$.
As a linear map, the canonical embedding $\sigma$ has a matrix representation $V \in \mathbb{C}^{m \times m}$, so that, for each $x \in \mathcal{O}_K$, we have $\sigma(x) = V \cdot (x_0, \dots, x_{m-1})^\intercal$.
For the equivalence between RLWE and PLWE, it is important to determine when, whether $\|x\|$ is small, then so is $\|\sigma(x)\|$, and vice versa.
This notion is quantified by $V$ having a small \emph{condition number} $\Cond(V) := \|V\| \|V^{-1}\|$, where $\|V\| := \sqrt{\Tr(V^*\!\,V)}$ is the \emph{Frobenius norm} of $V$, and $V^*$ is the conjugate transpose of $V$.
Precisely, for the equivalence of the RLWE and PLWE problems it must be $\Cond(V) = O(m^r)$ for some constant $r > 0$, depending only on the family of polynomials $f$.

An important case is that of cyclotomic fields. 
When $K = \mathbb{Q}(\zeta_n)$ is the $n$th cyclotomic field, $V_n := V$ is the Vandermonde matrix of the $n$th cyclotomic polynomial $\Phi_n(X)$, that is,
\begin{equation*}
    V_n := 
    \begin{pmatrix}
        1      & \zeta_{n,0}   & \zeta_{n,0}^2   & \cdots & \zeta_{n,0}^{m-1}   \\
        1      & \zeta_{n,1}   & \zeta_{n,1}^2   & \cdots & \zeta_{n,1}^{m-1}   \\
        1      & \zeta_{n,2}   & \zeta_{n,2}^2   & \cdots & \zeta_{n,2}^{m-1}   \\
        \vdots & \vdots        & \vdots          & \ddots & \vdots              \\
        1      & \zeta_{n,m-1} & \zeta_{n,m-1}^2 & \cdots & \zeta_{n,m-1}^{m-1} \\
    \end{pmatrix} ,
\end{equation*}
where $\zeta_{n,0}, \ldots, \zeta_{n,m-1}$ are the primitive $n$th roots of unity, and $m = \varphi(n)$ is the Euler totient function of $n$.
Note that $\Phi_n(X)$ has degree $m$.
If $n$ is a power of $2$, then it is easy to show that $V_n$ is a scaled isometry, so that $\Cond(V_n) = m$ and consequently RLWE and PLWE are equivalent.
Blanco‑Chac\'on~\cite{B-C2020b} (see also~\cite{B-C2021, BCLH2021}) proved that $\Cond(V_n) = O(n^{r_k})$, where $r_k > 0$ is a constant depending only on the number $k$ of distinct prime factors of $n$.
Therefore, RLWE and PLWE restricted to the positive integers $n$ with a bounded number of prime factors are equivalent.
Furthermore, in a previous work~\cite{DSS2021}, the authors gave an explicit formula for the condition number of $V_n$ when $n$ is a prime power or a power of $2$ times an odd prime power.

Our main result is the following.

\begin{theorem}\label{thm:main}
    There exist infinitely many positive integers $n$ such that
    \begin{equation*}
        \Cond(V_n) > \exp\!\big(n^{\log 2 / \log \log n}\big) / \sqrt{n} .
    \end{equation*}
    In~particular, for every fixed $r > 0$, we have that $\Cond(V_n) \neq O(n^r)$.
\end{theorem}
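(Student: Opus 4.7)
The plan is to estimate the two factors of $\Cond(V_n) = \|V_n\|\,\|V_n^{-1}\|$ separately and then plug in a classical analytic input on the size of cyclotomic coefficients. The factor $\|V_n\|$ is immediate: every entry of $V_n$ is a power of a primitive $n$th root of unity and hence has modulus one, so $\|V_n\|^2 = m^2$, and therefore $\|V_n\| = m$.

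The substantive step is to bound $\|V_n^{-1}\|$ from below. I would exploit the fact that $V_n^{-1}$ realises Lagrange interpolation at the primitive $n$th roots of unity: for any value vector $v \in \mathbb{C}^m$, the vector $V_n^{-1} v$ lists the coefficients of the unique polynomial of degree $<m$ sending $\zeta_{n,j}$ to $v_j$. The key idea is to probe $V_n^{-1}$ with the specific test vector $v := (\zeta_{n,0}^m, \ldots, \zeta_{n,m-1}^m)^\intercal$, namely the evaluations of $X^m$ at the primitive $n$th roots. Writing $\Phi_n(X) = X^m + c_{m-1} X^{m-1} + \cdots + c_0$ and using $\Phi_n(\zeta_{n,j}) = 0$, the interpolant of degree $<m$ matching $X^m$ at every $\zeta_{n,j}$ is $X^m - \Phi_n(X) = -\sum_{i=0}^{m-1} c_i X^i$; hence $V_n^{-1} v = -(c_0, \ldots, c_{m-1})^\intercal$. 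Combining $\|v\|^2 = m$ with the submultiplicative inequality $\|V_n^{-1} v\| \leq \|V_n^{-1}\|\,\|v\|$ (valid for the Frobenius norm) gives
\[
    \|V_n^{-1}\| \;\geq\; \frac{1}{\sqrt{m}}\bigl(c_0^2 + \cdots + c_{m-1}^2\bigr)^{1/2} \;\geq\; \frac{A(n)}{\sqrt{m}},
\]
where $A(n) := \max_i |c_i|$ is the height of $\Phi_n$. Multiplying by $\|V_n\| = m$ yields the clean bound $\Cond(V_n) \geq \sqrt{m}\,A(n)$.

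The final ingredient is a classical theorem of Vaughan asserting that there exist infinitely many $n$ with $\log A(n) \geq n^{\log 2/\log\log n}$, essentially the maximum possible order. For such $n$ the inequality above gives $\Cond(V_n) \geq \sqrt{m}\,\exp(n^{\log 2/\log\log n})$, comfortably beating $\exp(n^{\log 2/\log\log n})/\sqrt{n}$; the denominator $\sqrt{n}$ in the statement is slack absorbing any polynomial correction one might incur in the precise form of the cited analytic estimate. The in-particular clause follows because $\exp(n^{\log 2/\log\log n})/\sqrt{n}$ outgrows every polynomial in $n$.

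I expect the main obstacle to be spotting the correct probe $v = (\zeta_{n,j}^m)_j$: once one notices that the identity $X^m \equiv -\sum_{i=0}^{m-1} c_i X^i \pmod{\Phi_n}$ extracts the coefficients of $\Phi_n$ inside a single column of $V_n^{-1}$, the Vaughan-type cyclotomic height estimate finishes the job with room to spare. The remaining ingredients are either elementary (the computation of $\|V_n\|$ and the submultiplicative inequality) or quoted from the literature on cyclotomic coefficient heights.
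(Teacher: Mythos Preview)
Your proof is correct and actually takes a more direct route than the paper's. Both arguments compute $\|V_n\|=m$, lower-bound $\|V_n^{-1}\|$ in terms of the height $A(n)$ of $\Phi_n$, and then invoke Vaughan's theorem; the difference lies in the middle step. The paper introduces the $m\times mn$ extension $W_n$ of $V_n$, proves the orthogonality relation $W_nW_n^*=mn\,\Id_m$ and the identity $V_n^{-1}W_n=(\Id_m\mid C_n^{m}\mid\cdots\mid C_n^{(n-1)m})$ with $C_n$ the companion matrix of $\Phi_n$, and from these derives the \emph{exact} formula $\|V_n^{-1}\|^2=\tfrac{1}{mn}\sum_{k=0}^{n-1}\|C_n^{km}\|^2$; dropping all summands but $k=1$ and noting that one column of $C_n^m$ equals $(-a_n(0),\ldots,-a_n(m-1))^\intercal$ gives $\|V_n^{-1}\|^2\ge A(n)^2/(mn)$ and hence $\Cond(V_n)\ge\sqrt{m/n}\,A(n)$. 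Your probe $v=(\zeta_{n,j}^m)_j$ is precisely the $m$th column of the paper's $W_n$, and your computation $V_n^{-1}v=-(c_0,\ldots,c_{m-1})^\intercal$ is that single column of the identity $V_n^{-1}W_n=S_n$; but by applying $\|V_n^{-1}v\|\le\|V_n^{-1}\|\,\|v\|$ directly you bypass the orthogonality lemma and the exact formula entirely, and you avoid the factor-of-$n$ loss incurred when the paper discards the other $n-1$ summands. Consequently your bound $\Cond(V_n)\ge\sqrt{m}\,A(n)$ is stronger than the paper's by a factor of $\sqrt{n}$. What the paper's longer route buys is a closed-form expression for $\|V_n^{-1}\|^2$ of possible independent interest; what your approach buys is a two-line argument and a sharper constant.
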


As a consequence of Theorem~\ref{thm:main} and the previous considerations, one immediately gets the following corollary.

\begin{corollary}
    RLWE and PLWE over cyclotomic fields are not equivalent. 
\end{corollary}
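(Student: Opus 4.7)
The plan is to deduce the Corollary as an essentially immediate consequence of Theorem~\ref{thm:main}, once the standard definition of equivalence between RLWE and PLWE is recalled. The bulk of the work will therefore not be new mathematics, but rather a careful bookkeeping of how the Vandermonde matrix $V_n$ controls the distortion of noise under any reduction between the two problems.

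First, I would review the noise model: in RLWE an error is sampled as a (spherical or elliptical) Gaussian in the canonical embedding $\sigma\colon K \to \mathbb{C}^m$, whereas in PLWE it is sampled as a Gaussian in the coefficient embedding. Since these two embeddings are intertwined by $\sigma(x) = V_n\cdot(x_0,\ldots,x_{m-1})^\intercal$, pushing a Gaussian of width $s$ from the coefficient embedding into the canonical one produces a distribution whose support is stretched by a factor at most $\|V_n\|$, and the converse direction by a factor at most $\|V_n^{-1}\|$. Thus any polynomial-time transformation of an RLWE instance to a PLWE instance (or vice versa) that preserves the underlying secret will inflate the effective noise parameter by a factor that, in the worst case, is bounded below by one of $\|V_n\|$ or $\|V_n^{-1}\|$; composing reductions in both directions yields a blow-up of at least $\Cond(V_n)$.

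Next, I would invoke the standard notion of equivalence recalled in the introduction: RLWE and PLWE over a family of number fields $\mathbb{Q}(\theta)$ of degree $m$ are considered equivalent precisely when reductions exist in both directions that run in time polynomial in $m$ and increase the noise by a factor $O(m^r)$ for some fixed $r>0$; this is exactly the condition $\Cond(V_n)=O(m^r)$ singled out in the introduction. Since $m=\varphi(n)$ satisfies $n^{1-\varepsilon}\ll m \le n$, polynomial growth in $m$ is the same as polynomial growth in $n$.

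Finally, I would apply Theorem~\ref{thm:main}: it produces an infinite sequence of $n$ along which $\Cond(V_n)$ exceeds $\exp\!\bigl(n^{\log 2/\log\log n}\bigr)/\sqrt{n}$, hence dominates every fixed power $n^r$. Consequently no reduction of the type above can exist uniformly in $n$, and RLWE and PLWE over cyclotomic fields fail to be equivalent. The only delicate point, and the one I would spell out most carefully, is the quantitative claim that the noise blow-up in any candidate reduction is genuinely lower-bounded by $\Cond(V_n)$, rather than by some smaller parameter of $V_n$; this is where the Frobenius-norm definition of $\Cond$ used in the paper has to be matched with the operator norms that naturally govern the transport of Gaussians, using the elementary inequalities between the two on $m\times m$ matrices, which contribute only an additional polynomial-in-$m$ factor and therefore cannot rescue equivalence in view of the super-polynomial lower bound of Theorem~\ref{thm:main}.
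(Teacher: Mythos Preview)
Your proposal is correct and follows essentially the same approach as the paper: the corollary is deduced immediately from Theorem~\ref{thm:main} together with the criterion, stated in the introduction, that equivalence requires $\Cond(V_n)=O(m^r)$ for some fixed $r>0$. The paper in fact gives no separate proof at all, simply writing that the corollary ``immediately'' follows from Theorem~\ref{thm:main} and the preceding discussion; your added remarks on the noise transport and on the Frobenius-versus-operator norm discrepancy are more careful than anything the paper spells out, but they do not change the underlying argument.
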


It might be interesting to determine the maximal order of $\Cond(V_n)$ and, in particular, if the lower bound of Theorem~\ref{thm:main} can be improved significantly.
For a plot of the values of $\Cond(V_n)$ up to $n=10000$, see Figure~\ref{fig:1}.
The library used for the calculation of $\Cond(V_n)$ is available in \cite{CondSoftware}.

\subsection*{Acknowledgements}

The authors are members of CrypTO, the group of Cryptography and Number Theory of Politecnico di Torino.
A.~J.~Di Scala and C.~Sanna are members of GNSAGA of INdAM.
A. J. Di Scala is a member of DISMA Dipartimento di Eccellenza MIUR 2018-2022.
E. Signorini is a cryptographer at Telsy~S.p.A.

\section{Proof of Theorem~\ref{thm:main}}

Throughout this section, let $n$ be a positive integer and put $m := \varphi(n)$.
We write $\Id_k$ for the $k \times k$ identity matrix, and we count rows and columns starting from $0$, so that the first row or column is the $0$th.
Furthermore, let
\begin{equation*}
    W_n := \begin{pmatrix}
        1      & \zeta_{n,0}   & \zeta_{n,0}^2   & \cdots & \zeta_{n,0}^{mn-1}   \\
        1      & \zeta_{n,1}   & \zeta_{n,1}^2   & \cdots & \zeta_{n,1}^{mn-1}   \\
        1      & \zeta_{n,2}   & \zeta_{n,2}^2   & \cdots & \zeta_{n,2}^{mn-1}   \\
        \vdots & \vdots        & \vdots          & \ddots & \vdots               \\
        1      & \zeta_{n,m-1} & \zeta_{n,m-1}^2 & \cdots & \zeta_{n,m-1}^{mn-1} \\
    \end{pmatrix}
\end{equation*}
be the $m \times mn$ matrix obtained by ``continuing'' $V_n$ to the right.

\begin{lemma}\label{lem:ortho}
    We have $W_n W_n^* = mn \Id_m$.
\end{lemma}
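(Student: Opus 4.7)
The plan is to compute $W_n W_n^*$ entry by entry and recognize each entry as a geometric sum over roots of unity.

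By the definition of $W_n$ and of the conjugate transpose, the $(i,k)$ entry of $W_n W_n^*$ is
\begin{equation*}
    (W_n W_n^*)_{i,k} = \sum_{j=0}^{mn-1} \zeta_{n,i}^{\,j} \, \overline{\zeta_{n,k}^{\,j}}
    = \sum_{j=0}^{mn-1} \bigl( \zeta_{n,i} \, \zeta_{n,k}^{-1} \bigr)^{\!j},
\end{equation*}
using that $\overline{\zeta_{n,k}} = \zeta_{n,k}^{-1}$ because $\zeta_{n,k}$ lies on the unit circle. The first step is therefore to study $\omega_{i,k} := \zeta_{n,i}\,\zeta_{n,k}^{-1}$, which is a quotient of two $n$th roots of unity and hence itself an $n$th root of unity (not necessarily primitive).

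Next I would split into two cases. If $i = k$, then $\omega_{i,k} = 1$ and the sum is plainly $mn$, giving the diagonal entries $(W_n W_n^*)_{i,i} = mn$. If $i \neq k$, then $\zeta_{n,i} \neq \zeta_{n,k}$ (the primitive $n$th roots of unity are distinct), so $\omega_{i,k} \neq 1$, and the sum is a finite geometric series:
\begin{equation*}
    \sum_{j=0}^{mn-1} \omega_{i,k}^{\,j} = \frac{1 - \omega_{i,k}^{\,mn}}{1 - \omega_{i,k}}.
\end{equation*}
The crucial observation, and arguably the whole content of the lemma, is that the length $mn$ of the summation was chosen precisely so that $\omega_{i,k}^{\,mn} = 1$: since $\omega_{i,k}^{\,n} = 1$ and $n \mid mn$, the numerator vanishes and the off-diagonal entry is $0$.

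Combining both cases yields $W_n W_n^* = mn \, \mathrm{Id}_m$, as claimed. There is no real obstacle; the only subtlety is motivating the choice of the width $mn$ for the continuation of $V_n$, which is exactly what makes the geometric sum collapse for every pair of distinct primitive $n$th roots of unity.
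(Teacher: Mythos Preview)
Your proof is correct and matches the paper's approach exactly: compute the $(i,k)$ entry of $W_n W_n^*$ as the geometric sum $\sum_{j=0}^{mn-1}(\zeta_{n,i}\overline{\zeta_{n,k}})^j$, which equals $mn$ when $i=k$ and $0$ otherwise. You simply spell out a bit more detail (the use of $\overline{\zeta_{n,k}}=\zeta_{n,k}^{-1}$ and the vanishing of $\omega_{i,k}^{mn}-1$) than the paper does.
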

\begin{proof}
    The scalar product of the $i$th row of $W_n$ and the $j$th column of $W_n^*$ is equal to
    \begin{equation*}
        \sum_{k\,=\,0}^{mn - 1} \left(\zeta_{n, i} \overline{\zeta_{n, j}}\right)^k = \begin{cases} mn & \text{ if } i = j ; \\ 0 & \text{ if } i \neq j ; \end{cases}
    \end{equation*}
    where we used the formula for the sum of a geometric progression.
    The claim follows.
\end{proof}

Let $a_n(j)$ denote the coefficient of $X^j$ in the $n$th cyclotomic polynomial $\Phi_n(X)$, that is,
\begin{equation*}
    \Phi_n(X) = \sum_{j \,=\, 0}^m a_n(j) X^j .
\end{equation*}
The study of the coefficients of the cyclotomic polynomials has a very long history, which goes back at least to Gauss.
For a survey, see~\cite{SurveyCyclo}.
Let $A(n)$ be the maximum of the absolute values of $a_n(0), \dots, a_n(m - 1)$.
We need the following result of Vaughan~\cite{V1974}.

\begin{theorem}\label{thm:vaughan}
    We have $A(n) > \exp\!\left(n^{\log 2 / \log \log n} \right)$ for infinitely many positive integers $n$.
\end{theorem}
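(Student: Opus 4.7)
The plan is to lower-bound $A(n)$ by relating it to the sup-norm $\|\Phi_n\|_\infty := \max_{|z|=1} |\Phi_n(z)|$, to express $\|\Phi_n\|_\infty$ via the Möbius product formula, and then to exploit that expression on the family $n = q_1 q_2 \cdots q_k$, where $q_1 < q_2 < \cdots$ are the odd primes in increasing order.

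First, the triangle inequality applied to $\Phi_n(z) = \sum_{j \leq m} a_n(j) z^j$ gives $\|\Phi_n\|_\infty \leq (m+1)\, A(n)$, so it is enough to produce infinitely many $n$ with $\log \|\Phi_n\|_\infty \geq n^{\log 2 / \log \log n} + \log(m+1)$. Next, Möbius inversion yields, for every $n > 1$,
$$\Phi_n(X) = \prod_{d\,|\,n}(X^d - 1)^{\mu(n/d)},$$
so that for $z = e^{2\pi i \theta}$ away from $n$-th roots of unity,
$$\log|\Phi_n(z)| = \sum_{d\,|\,n}\mu(n/d)\,\log\bigl(2\,|\sin \pi d \theta|\bigr).$$

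The central task is to prove, for $n = q_1 \cdots q_k$, the existence of a $\theta$ at which this signed sum is at least of order $2^k$. For such squarefree $n$, the $2^k$ divisors partition evenly by the sign of $\mu(n/d) = (-1)^{k-\omega(d)}$. Choosing $\theta$ rational with denominator built from the first few $q_i$'s (the precise construction being the delicate core of Vaughan's argument), one arranges that for the $2^{k-1}$ divisors with $\mu(n/d) = +1$ the factors $|2\sin \pi d \theta|$ stay comfortably close to $2$, while for the $2^{k-1}$ divisors with $\mu(n/d) = -1$ the corresponding factors do not collapse too close to $0$. Using the elementary estimate $|\sin \pi\alpha| \asymp \|\alpha\|$ (distance to the nearest integer) one then recovers $\log \|\Phi_n\|_\infty \gg 2^k$. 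If an explicit $\theta$ proves elusive, one can replace it by an averaging/pigeonhole argument: were \emph{no} $\theta$ to produce a large positive contribution, the integrated version of the identity would be systematically small, contradicting Parseval's identity
$$\frac{1}{2\pi}\int_0^{2\pi}|\Phi_n(e^{i\theta})|^2\,d\theta = \sum_{j\,=\,0}^{m} |a_n(j)|^2,$$
coupled with known lower bounds on the right-hand side.

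Finally, by the prime number theorem, $\log n = \vartheta(q_k) \sim q_k \sim k \log k$, whence $k = (1+o(1))\,\log n / \log\log n$, so that
$$2^k \,=\, \exp\!\bigl((1+o(1))(\log 2)\log n / \log \log n\bigr) \,\geq\, \exp\!\bigl(n^{\log 2 / \log \log n}\bigr)$$
for $n$ large enough along the sequence $n = q_1 \cdots q_k$; the factor $m + 1 < n$ is absorbed in the exponential. The main obstacle is the analytic-combinatorial estimate in the preceding paragraph: producing a single $\theta$ (or controlling a suitable average) so that the $2^{k-1}$ positive contributions genuinely beat the $2^{k-1}$ negative ones by an amount of order $2^k$—that is, by a \emph{constant per divisor on average}—despite the perfect balance in the number of positive and negative signs. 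Overcoming this cancellation barrier is precisely the technical heart of Vaughan's theorem.
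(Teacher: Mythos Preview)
The paper does not prove this statement; it is quoted verbatim as a result of Vaughan \cite{V1974} and used as a black box in the proof of Theorem~\ref{thm:main}. There is therefore no proof in the paper to compare your attempt against.

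As for the attempt itself: you correctly identify the standard route (M\"obius product formula, evaluation at a well-chosen point on the unit circle, $n$ taken to be the product of the first $k$ odd primes), but you explicitly leave the decisive step---producing a $\theta$ at which the signed sum $\sum_{d\mid n}\mu(n/d)\log\lvert 2\sin\pi d\theta\rvert$ is provably $\gg 2^k$---unproved, calling it ``the technical heart of Vaughan's theorem.'' That step \emph{is} the theorem; what you have written is an outline, not a proof. Your fallback averaging argument is circular: it invokes ``known lower bounds'' on $\sum_j |a_n(j)|^2$, but any lower bound on that sum strong enough to force $\|\Phi_n\|_\infty$ large is already at least as deep as the bound on $A(n)$ you are trying to establish, since $\sum_j |a_n(j)|^2 \le (m+1)A(n)^2$.

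There is also a genuine error in your final display. You assert $2^k \geq \exp\!\big(n^{\log 2/\log\log n}\big)$, but in fact
\[
2^k \;=\; \exp\!\big((1+o(1))(\log 2)\log n/\log\log n\big) \;=\; n^{(1+o(1))\log 2/\log\log n},
\]
which is asymptotic to $n^{\log 2/\log\log n}$ itself, not to its exponential. The intended chain is $\log A(n) \gtrsim \log\|\Phi_n\|_\infty \gg 2^k \sim n^{\log 2/\log\log n}$, and only then does exponentiation give $A(n) > \exp\!\big(n^{\log 2/\log\log n}\big)$; your display conflates $A(n)$ with $\log A(n)$.
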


Let $C_n$ be the \emph{companion matrix} of $\Phi_n(X)$, which is the $m \times m$ matrix defined as
\begin{equation*}
    C_n := 
    \begin{pmatrix}
        0      & 0      & \cdots & 0      & -a_n(0)     \\
        1      & 0      & \cdots & 0      & -a_n(1)     \\
        0      & 1      & \cdots & 0      & -a_n(2)     \\
        \vdots & \vdots & \ddots & \vdots & \vdots      \\
        0      & 0      & \cdots & 1      & -a_n(m - 1) \\
    \end{pmatrix} ,
\end{equation*}
and let 
\begin{equation*}
    S_n := \big(\Id_m \mid C_n^m \mid C_n^{2m} \mid \cdots \mid C_n^{(n-1)m}\big)
\end{equation*}
be the $m \times mn$ matrix obtained by the juxtaposition of the first $n$ powers of $C_n^m$.

\begin{lemma}\label{lem:W_n-V_nS_n}
    We have $V_n^{-1} W_n = S_n$.
\end{lemma}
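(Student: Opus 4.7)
The plan is to identify the natural action of the companion matrix on the rows of $V_n$: right-multiplication by $C_n$ shifts the $i$th row $(1, \zeta_{n,i}, \ldots, \zeta_{n,i}^{m-1})$ to $(\zeta_{n,i}, \zeta_{n,i}^2, \ldots, \zeta_{n,i}^m)$. Iterating this shift $jm$ times for $j = 0, 1, \ldots, n-1$ and juxtaposing the resulting blocks reconstructs $W_n$ on the right.

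First, I would compute $v_i C_n$ directly, where $v_i := (1, \zeta_{n,i}, \ldots, \zeta_{n,i}^{m-1})$ denotes the $i$th row of $V_n$. For $0 \leq k < m-1$, the $k$th column of $C_n$ is the standard basis vector with a $1$ in row $k+1$, so $(v_i C_n)_k = \zeta_{n,i}^{k+1}$. For $k = m-1$, using that $\Phi_n$ is monic of degree $m$ and vanishes at $\zeta_{n,i}$, so that $\sum_{j=0}^{m-1} a_n(j)\,\zeta_{n,i}^j = -\zeta_{n,i}^m$, one obtains
\begin{equation*}
(v_i C_n)_{m-1} \;=\; -\sum_{j\,=\,0}^{m-1} a_n(j)\,\zeta_{n,i}^{\,j} \;=\; \zeta_{n,i}^m .
\end{equation*}
Thus $v_i C_n = (\zeta_{n,i}, \zeta_{n,i}^2, \ldots, \zeta_{n,i}^m)$.

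Second, by an immediate induction on $r \geq 0$, the $i$th row of $V_n C_n^{r}$ equals $(\zeta_{n,i}^{r}, \zeta_{n,i}^{r+1}, \ldots, \zeta_{n,i}^{r+m-1})$. Specializing to $r = jm$ for $j = 0, 1, \ldots, n-1$, the matrix $V_n C_n^{jm}$ is precisely the $m \times m$ block of $W_n$ consisting of the columns indexed by $jm, jm+1, \ldots, jm+m-1$. Juxtaposing these $n$ blocks yields $V_n S_n = W_n$, and since $V_n$ is invertible (its rows correspond to the pairwise distinct roots of $\Phi_n$), left-multiplication by $V_n^{-1}$ gives the claim.

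The argument reduces to a one-line computation once the shift identity is in place, so I do not foresee any genuine obstacle. The only point requiring mild care is the consistent bookkeeping of indices across the juxtaposition, since the columns of $W_n$ and the powers of $C_n^{m}$ must be aligned with the $0$-based indexing convention adopted in the paper.
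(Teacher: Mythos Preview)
Your proof is correct and follows essentially the same approach as the paper: both arguments establish the block identity $V_n C_n^{j} = \big(\zeta_{n,i}^{j+k}\big)_{i,k}$ by using $\Phi_n(\zeta_{n,i})=0$, and then juxtapose for $j=0,m,\dots,(n-1)m$ to obtain $W_n = V_n S_n$. The only cosmetic difference is that the paper phrases the key step via the linear-map interpretation of $C_n$ (multiplication by $\zeta_{n,i}$ on the power basis), whereas you carry out the equivalent row-by-row matrix computation directly.
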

\begin{proof}
    Let $K := \mathbb{Q}(\zeta_{n})$ be the $n$th cyclotomic field.
    For each $k \in \{0, \dots, m-1\}$ we have that $1, \zeta_{n,k}, \zeta_{n,k}^2, \dots, \zeta_{n, k}^{m - 1}$ is a basis of $K$ over $\mathbb{Q}$.
    Moreover, multiplication by $\zeta_{n, k}$ is a $\mathbb{Q}$-linear map $K \to K$ whose transformation matrix respect to the aforementioned basis is equal to $C_n$.
    Therefore, if $z_0, \dots, z_{m-1} \in K$ satisfy
    \begin{equation*}
        \begin{pmatrix}
            z_0 \\ z_1 \\ \vdots \\ z_{m-1}
        \end{pmatrix}
        = V_n
        \begin{pmatrix}
            c_0 \\ c_1 \\ \vdots \\ c_{m - 1}
        \end{pmatrix}
    \end{equation*}
    for some $c_0, \dots, c_{m-1} \in \mathbb{Q}$, then it follows that
    \begin{equation*}
        \begin{pmatrix}
            \zeta_{n,0}^j z_0 \\ \zeta_{n,1}^j z_1 \\ \vdots \\ \zeta_{n,m-1}^j z_{m-1}
        \end{pmatrix}
        = V_n C_n^j
        \begin{pmatrix}
            c_0 \\ c_1 \\ \vdots \\ c_{m - 1}
        \end{pmatrix}
    \end{equation*}
    for every integer $j \geq 0$.
    Consequently, we have that
    \begin{equation}\label{equ:large-matrix}
        \begin{pmatrix}
            \zeta_{n,0}^j   & \zeta_{n,0}^{j+1}   & \cdots & \zeta_{n,0}^{j+m-1}   \\
            \zeta_{n,1}^j   & \zeta_{n,1}^{j+1}   & \cdots & \zeta_{n,1}^{j+m-1}   \\
            \vdots          & \vdots              & \ddots & \vdots                \\
            \zeta_{n,m-1}^j & \zeta_{n,m-1}^{j+1} & \cdots & \zeta_{n,m-1}^{j+m-1} \\
        \end{pmatrix}
        = V_n C_n^j \Id_m = V_n C_n^j,
    \end{equation}
    for every integer $j \geq 0$.
    Therefore, by juxtaposition of~\eqref{equ:large-matrix} for $j = 0, m, 2m, \dots, (n - 1)m$, we obtain that $W_n = V_n S_n$.
    The claim follows.
\end{proof}

\begin{lemma}\label{lem:norm-of-inverse-V_n}
    We have $\|V_n^{-1}\|^2 = \tfrac1{mn}\sum_{k = 0}^{n - 1} \|C_n^{km}\|^2$.
\end{lemma}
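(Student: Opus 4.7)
The plan is to combine the two previous lemmas via a trace computation. From Lemma~\ref{lem:W_n-V_nS_n} we have the identity $S_n = V_n^{-1} W_n$, and from Lemma~\ref{lem:ortho} we know $W_n W_n^* = mn\,\Id_m$. The natural move is to compute $\|S_n\|^2$ in two different ways and equate them.

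First I would observe that since $S_n$ is the horizontal juxtaposition of the blocks $\Id_m, C_n^m, C_n^{2m}, \dots, C_n^{(n-1)m}$, and the squared Frobenius norm is additive over block columns, we immediately get
\begin{equation*}
    \|S_n\|^2 \;=\; \|\Id_m\|^2 + \sum_{k=1}^{n-1} \|C_n^{km}\|^2 \;=\; \sum_{k=0}^{n-1} \|C_n^{km}\|^2,
\end{equation*}
since the $k=0$ term $\|\Id_m\|^2 = m = \|C_n^0\|^2$ fits into the sum.

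Second, using $S_n = V_n^{-1} W_n$ together with the cyclic property of the trace, I would rewrite
\begin{equation*}
    \|S_n\|^2 \;=\; \Tr(S_n^* S_n) \;=\; \Tr\!\big(W_n^* (V_n^{-1})^* V_n^{-1} W_n\big) \;=\; \Tr\!\big(V_n^{-1} W_n W_n^* (V_n^{-1})^*\big) .
\end{equation*}
Applying Lemma~\ref{lem:ortho} now replaces the middle factor $W_n W_n^*$ by $mn\,\Id_m$, leaving
\begin{equation*}
    \|S_n\|^2 \;=\; mn\,\Tr\!\big(V_n^{-1} (V_n^{-1})^*\big) \;=\; mn\,\|V_n^{-1}\|^2 .
\end{equation*}

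Equating the two expressions for $\|S_n\|^2$ yields the claim. There is no real obstacle here: the argument is a short piece of linear algebra whose only content is the interplay between Lemmas~\ref{lem:ortho} and \ref{lem:W_n-V_nS_n}, and whose point is to convert the hard-to-compute quantity $\|V_n^{-1}\|^2$ into a sum of norms of powers of the companion matrix $C_n$, which in turn will be estimated using the coefficient bound of Theorem~\ref{thm:vaughan}.
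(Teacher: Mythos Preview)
Your proposal is correct and follows essentially the same approach as the paper: both compute $\|S_n\|^2$ in two ways, once from the block decomposition of $S_n$ and once from the factorization $S_n=V_n^{-1}W_n$ combined with Lemma~\ref{lem:ortho}, and equate the results. The only cosmetic difference is that the paper works directly with $\Tr(S_n S_n^*)$ rather than invoking the cyclic property on $\Tr(S_n^* S_n)$.
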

\begin{proof}
    From Lemma~\ref{lem:ortho} and Lemma~\ref{lem:W_n-V_nS_n}, it follows that
    \begin{equation*}
        mn \|V_n^{-1}\|^2 = mn \Tr\!\left(V_n^{-1} \big(V_n^{-1}\big)^*\right) = \Tr\!\left(V_n^{-1} W_n W_n^* \big(V_n^{-1}\big)^*\right) = \Tr(S_n S_n^*) .
    \end{equation*}
    Moreover, by the definition of $S_n$, we have that
    \begin{align*}
        \Tr(S_n S_n^*) & = \Tr\Big(\big(\Id_m \mid C_n^m \mid \cdots \mid C_n^{(n-1)m}\big) \begin{pmatrix} \Id_m \\ \cmidrule(lr){1-1} (C_n^m)^* \\ \cmidrule(lr){1-1} \raisebox{3pt}{\vdots} \\ \cmidrule(lr){1-1} \big(C_n^{(n-1)m}\big)^* \end{pmatrix} \Big) \\
                       & = \sum_{k \,=\, 0}^{n - 1} \Tr\!\big(C_n^{km} \big(C_n^{km}\big)^*\big) = \sum_{k \,=\, 0}^{n - 1} \|C_n^{km}\|^2 ,
    \end{align*}
    and the claim follows.
\end{proof}

\begin{lemma}\label{lem:companion-powers}
    Let $k$ be a positive integer and let
    \begin{equation*}
        C := \begin{pmatrix} 
            0      & 0      & \cdots & 0      & c_0     \\
            1      & 0      & \cdots & 0      & c_1     \\
            0      & 1      & \cdots & 0      & c_2     \\
            \vdots & \vdots & \ddots & \vdots & \vdots  \\
            0      & 0      & \cdots & 1      & c_{k-1}
        \end{pmatrix}
        \in \mathbb{C}^{k \times k} .
    \end{equation*}
    Then, for every integer $j \in [1, k]$, the $(k - j)$th column of $C^j$ is equal to $\begin{pmatrix}c_0 & c_1 & \cdots & c_{k-1} \end{pmatrix}^\intercal$.
\end{lemma}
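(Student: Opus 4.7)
The plan is to recast the statement as an identity about $C^j$ acting on the standard basis. Writing $e_0, e_1, \ldots, e_{k-1}$ for the standard basis of $\mathbb{C}^k$, the $(k-j)$th column of $C^j$ is by definition $C^j e_{k-j}$, so the lemma reduces to verifying that
\begin{equation*}
    C^j e_{k-j} = \begin{pmatrix} c_0 & c_1 & \cdots & c_{k-1} \end{pmatrix}^\intercal
\end{equation*}
for every $j \in [1, k]$.

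From the shape of $C$ one reads off directly that its columns $0, 1, \ldots, k-2$ are, respectively, $e_1, e_2, \ldots, e_{k-1}$, and that its last column is $(c_0, c_1, \ldots, c_{k-1})^\intercal$. Hence $C$ acts on the basis as
\begin{equation*}
    C e_i = e_{i+1} \text{ for } i \in \{0, \ldots, k-2\}, \qquad C e_{k-1} = \begin{pmatrix} c_0 & c_1 & \cdots & c_{k-1} \end{pmatrix}^\intercal .
\end{equation*}
Iterating the first relation $j - 1$ times gives $C^{j-1} e_{k-j} = e_{k-1}$ (the case $j = 1$ being the empty iteration), and one final application of $C$ together with the second relation yields the claimed equality. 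A one-line induction on $j$ makes this rigorous.

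There is no real obstacle here: the lemma is essentially a bookkeeping statement about the action of a companion matrix on the standard basis. The only thing to watch is the indexing — in particular, checking the boundary cases $j = 1$ (where no shift is needed and one applies $C$ directly to $e_{k-1}$) and $j = k$ (where the shift starts from $e_0$ and runs all the way up to $e_{k-1}$ before the final application). Once these are in order, the proof is a direct telescoping.
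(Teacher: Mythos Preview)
Your proof is correct and follows essentially the same approach as the paper: both rely on the observation that $C$ acts on the standard basis by the shift $e_i \mapsto e_{i+1}$ for $i < k-1$ and sends $e_{k-1}$ to the last column of $C$, and then conclude by a short induction/iteration. The paper records the slightly stronger statement that the first $k-j+1$ columns of $C^j$ coincide with the last $k-j+1$ columns of $C$, but the mechanism is identical to yours.
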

\begin{proof}
    Actually, a stronger claim holds: For every integer $j \in [1, k]$, the $0$th, $1$th, \dots, $(k - j)$th columns of $C^j$ are equal to the $(j - 1)$th, $j$th, \dots, $(k - 1)$th columns of $C$, respectively.
    This follows easily by induction on $j$.
\end{proof}

We are ready to prove Theorem~\ref{thm:main}.
From Lemma~\ref{lem:norm-of-inverse-V_n} and Lemma~\ref{lem:companion-powers}, it follows that
\begin{equation*}
    \|V_n^{-1}\|^2 = \tfrac1{mn}\sum_{k = 0}^{n - 1} \|C_n^{km}\|^2 \geq \tfrac1{mn} \|C_n^m\|^2 \geq \tfrac1{mn} \sum_{j \,=\, 0}^{m-1} |a_n(j)|^2 \geq \tfrac1{mn} A(n)^2 .
\end{equation*}
In turn, this implies that
\begin{equation*}
    \Cond(V_n) = \|V_n\| \|V_n^{-1}\| = m \|V_n^{-1}\| \geq \sqrt{\tfrac{m}{n}} A(n) \geq \tfrac1{\sqrt{n}} A(n) .
\end{equation*}
As a consequence, Theorem~\ref{thm:vaughan} yields that
\begin{equation*}
    \Cond(V_n) > \exp\!\left(n^{\log 2 / \log \log n} \right) / \sqrt{n} ,
\end{equation*}
for infinitely many positive integers $n$.
Therefore, for every fixed $r > 0$, we have that
\begin{equation*}
    \limsup_{n \to +\infty} \frac{\Cond(V_n)}{n^r} = +\infty ,
\end{equation*}
so that $\Cond(V_n) \neq O(n^r)$.
The proof is complete.

\begin{figure}
    \includegraphics[scale=0.9]{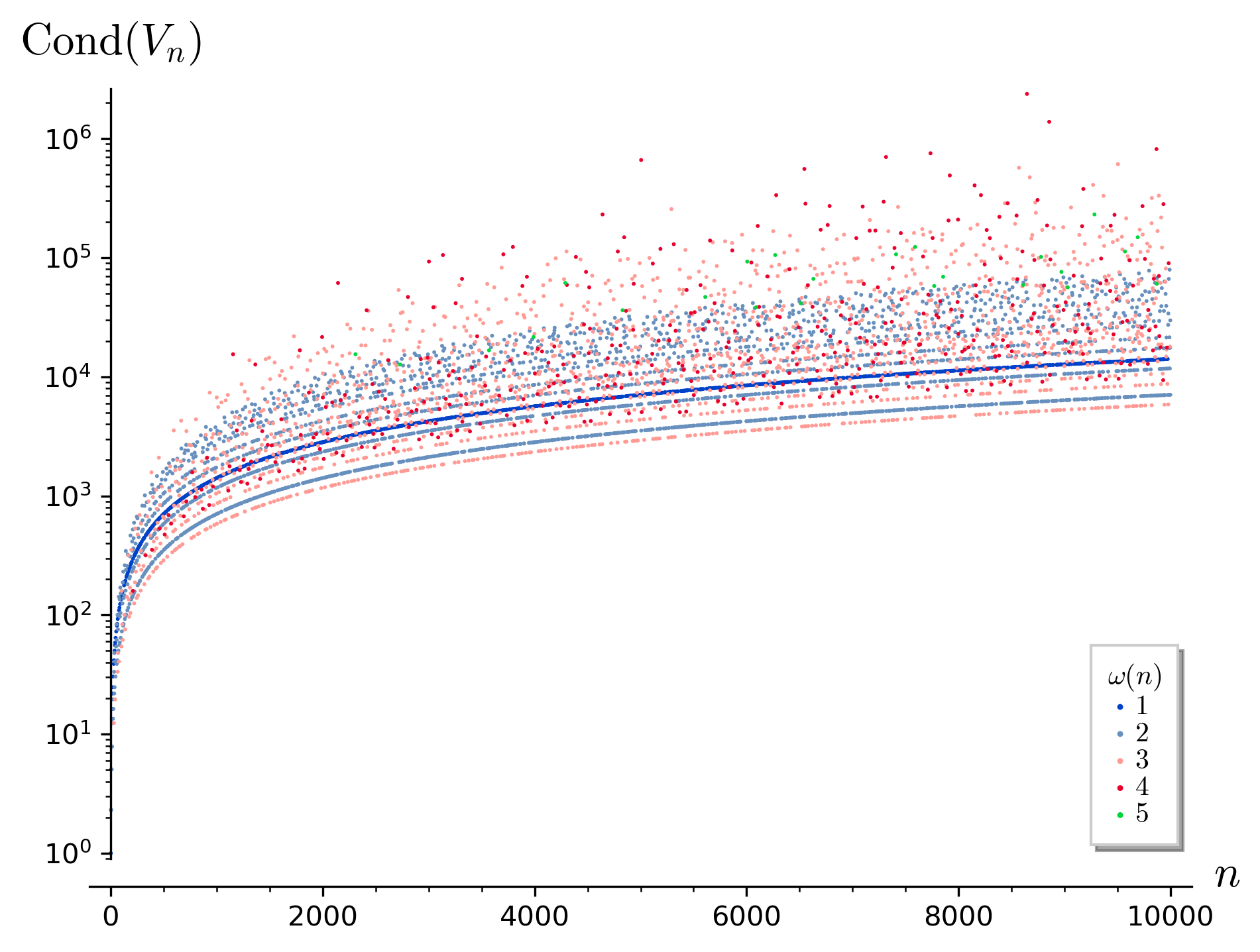}
    \caption{The condition number of $V_n$ with $n$ squarefree, $1 < n < 10000$. The data is partitioned according to the number $\omega(n)$ of prime factors of $n$.}
    \label{fig:1}
\end{figure}

\end{document}